\newtheorem {thm}{Theorem}[section]
\newtheorem {proposition}[thm]{Proposition}
 \newtheorem {lem}[thm]{Lemma}
 \theoremstyle{defintion}
 \newtheorem {df}[thm]{Definition}
 \theoremstyle{assumption}
 \theoremstyle{remark}
 \newtheorem{rem}[thm]{Remark}
 \theoremstyle{example}
\def\P{\operatorname{\mathbb P}}
\def\R{{\mathbb R}}
\def\N{{\mathbb N}}
\def\Z{{\mathbb Z}}
\def\cB{\mathcal{B}}
\def\cH{\mathcal{H}}
\def\cM{\mathcal{M}}
\def\cW{\mathcal{W}}
\def\cL{\mathcal{L}}
\def\lbl{\label}
\def\be{\begin{equation}}
\def\ee{\end{equation}}
\def\1{\operatorname{\mathbf 1}}
\newcommand{\iu}{{i\mkern1mu}}
\def\sup{{\rm sup}}
\def\W{\mathcal W}
\def\bbR{\mathbb R}
\def\bbZ{\mathbb Z}
\def\bbT{\mathbb T}
\def\bb1{\mathds 1}
\def\bP{\mathbf{P}}
\newcommand{\red}[1]{{\color{black} #1}}
\title{The Large Deviation Principle for $W$-random spectral measures}
\date{\today}
\author{ Mahya Ghandehari\thanks{Department of Mathematical Sciences, University of Delaware,
Newark, DE 19716, USA,
    {\tt mahya@udel.edu}}\;
  and 
Georgi S. Medvedev\thanks{Department of Mathematics, 
Drexel University, 3141 Chestnut St., Philadelphia, PA 19104, USA, 
{\tt medvedev@drexel.edu}}
}
\begin{document}
\maketitle
\begin{abstract}
  The $W$-random graphs provide a flexible framework for modeling large random
  networks. Using the Large Deviation Principle (LDP) for $W$-random graphs from \cite{DupMed22},
  we prove the LDP for the corresponding class of random symmetric Hilbert-Schmidt
  integral operators. Our main result describes how the eigenvalues and the eigenspaces
  of the integral operator are affected by the large deviations in the underlying random graphon.
  To prove the LDP, we demonstrate continuous dependence
  of the spectral measures associated with integral operators
  on the underlying graphons and use the Contraction Principle. To illustrate
  our results, we obtain leading order asymptotics of the eigenvalues of the integral 
  operators corresponding to certain random graph sequences. These examples
  suggest several representative scenarios of how the eigenvalues and the
  eigenspaces of the integral operators are affected by large deviations. 
  Potential implications of these observations for bifurcation analysis of Dynamical Systems and Graph Signal
  Processing are indicated.

\end{abstract}

\section{Introduction}
\setcounter{equation}{0}
The $W$-random graph model, introduced 
in \cite{LovSze06}, 
 provides a flexible framework for modeling large networks.
This model encompasses many network topologies common in applications, such as
 Erd\H{o}s-R{\' e}nyi, small-world, and power--law.
$W$-random graphs have been used in a variety of applications including 
interacting
particle systems  in statistical physics \cite{OliRei20, DupMed22}, coupled dynamical systems
\cite{Med14b, KVMed18},
mean--field games \cite{CaiHua21, CaiHo22}, signal processing on graphs \cite{GhaJan22, Ruiz-2021,Morency2021}, and neural networks \cite{neural-networks1,Neural-network2}, 
to mention a few.

There are several variants of the $W$-random graph model. In this paper, we will stick with the one
used in \cite{Med19}. Specifically, let $W$ be a graphon, that is, 
$W:\,[0,1]^2\rightarrow [0,1]$ is a symmetric measurable function. We define $\Gamma_W^n$,
a random graph on $n$ nodes with the adjacency matrix $(W_{ij}^n)_{1\leq i,j\leq n}$ such that
\begin{align}\lbl{def-Wn-1}
  \P\left(W^n_{ij}=1\right) =w^n_{ij},& \qquad \P\left(W^n_{ij}=0\right) =1-w^n_{ij}, \qquad 1\le i<j\le n,\\
  \lbl{def-Wn-2}
  W^n_{ii}=0, & \qquad W^n_{ij}= W_{ji}^n, 
\end{align}
where
$$
w^n_{ij}\doteq n^2\int_{Q_{ij}^n} W(x,y)\, dxdy,\quad  \mbox{ and }\quad Q_{ij}^n\doteq \left[\frac{i-1}{n},\frac{i}{n}\right]\times\left[\frac{j-1}{n},\frac{j}{n}\right].
$$
Graphs defined by \eqref{def-Wn-1} and \eqref{def-Wn-2} are called $W$-random graphs.
 We denote the $W$-random graph model by $\mathbb{G}(n,W)$.

 A $W$-random graph $\Gamma_W^n$ can be represented by a 0/1-valued step function on the unit square
\be\lbl{Wn}
W^n=\sum_{i,j=1}^n W^n_{ij} \1_{Q^n_{ij}},
\ee
where $(W^n_{ij},\; 1\le i<j\le n)$ are independent random variables defined by
\eqref{def-Wn-1}. 
Random graphons $W^n$ converge to the graphon $W$ almost surely (cf.~\cite{DupMed22}).
The convergence is with respect to the cut norm
\begin{equation}\label{cut-norm}
  \|W\|_\Box=\sup_{f,g} \left|\int_{[0,1]^2} f(x)W(x,y)g(y)\, dxdy\right|,
  \end{equation}
where the supremum is taken over all measurable $f,g:~[0,1]\to [-1,1]$. 
\footnote{In graph limit theory, the cut norm of a graphon is usually defined as  $\|W\|_\Box=\sup_{A,B} \left|\int_{A\times B} W(x,y)\, dxdy\right|$, where the supremum is taken over all measurable subsets of $[0,1]$. It is easy to see that the two norms are equivalent.}

The cut norm convergence of $\{W^n\}$ has strong implications for the
spectral properties of the integral operators
\begin{equation}\label{int-operator}
K_{W^n}:L^2[0,1]\to L^2[0,1],\quad (K_{W^n}f)(x)\doteq \int W^n(x,y)f(y)dy.
\end{equation}
As shown in \cite{Sze-2011}, the eigenvalues of $K_{W^n}$ converge to the corresponding
eigenvalues of  $K_{W}$. Moreover, the corresponding eigenspaces converge as well in the sense
that will be explained below (see also \cite{Sze-2011}).

The almost sure convergence of $W^n$ to $W$ can be interpreted as a law of large numbers.
Remarkably,  $\{W^n\}$  also admits a Large Deviation Principle (LDP) \cite{DupMed22}.
The LDP can be used to characterize the most likely realizations of $\{W^n\}$  in case
of the rare events. For instance, the LDP was used in \cite{DupMed22} to give an explicit
characterization of the $W$-random graph sequence in case of large deviations of the number of edges
from its expected value. This information is important in applications as
it allows to model rare events effectively. In this paper, we prove an LDP for spectral
measures associated with integral operators $K_{W^n}$. Our main result shows how the
eigenvalues and the eigenspaces of $K_{W^n}$ behave under large deviations.
In applications, the spectrum of $K_{W^n}$ is used to determine important properties such as stability of dynamical systems, expansion in graphs, or frequency of oscillations
in mechanical systems to name a few. Thus, it is important to know how the eigenvalues and
the eigenspaces are affected by large deviations.

The LDP for spectral measures is derived from the LDP for $W$-random graphs proved in \cite{DupMed22}
via the Contraction Principle \cite{Dembo-Zeitouni}. To use the Contraction Principle one needs to
show the continuous correspondence between the two spaces:  the space of graphons and the corresponding space of spectral measures. 
This depends on the choice of topologies in these spaces. Previous studies
of large deviations for random graphs suggest that the cut norm topology is the right choice
for the space of graphons \cite{ChaVar11, DupMed22, Varadhan-topology}. 
The use of the cut norm topology in the context of large deviations is advantageous for a specific reason: after appropriate identification of graphons, cut norm makes the (quotient) space of graphons compact (thanks to Szemer{\' e}di's Lemma \cite{LovSze07}).
For the space of associated spectral measures we adapt the notion of vague convergence \cite{Chung-Probability}
and show continuous dependence of the spectral measure on the corresponding graphon. This
is the main mathematical result of this work. It implies the LDP for $W$-random spectral measures.

After reviewing some preliminaries from the theory of graph limits  in \S~\ref{sec.basic}, we define
the space of spectral measures equipped with vague topology in \S~\ref{sec.vague}. In the theory
of graph limits, one works with the space of unlabeled graphons, i.e., the space of functions where
functions related via measure-preserving transformations are identified \cite{LovaszAMS}.
In this work, we need to extend this construction to spaces of 
bounded linear operators and spectral measures. This is done in \S~\ref{sec.equivalence}, which concludes
the description of the spaces used in this work. In Section~\ref{sec.continuous} we prove continuous
dependence of spectral measures on the underlying graphons. This implies the LDP for $W$-random
spectral measures presented in Sections~\ref{sec.ldp}. Here, we review the LDP for $W$-random
graphs from \cite{DupMed22} in \S~\ref{sec.ldp-old} and derive the LDP for $W$-random spectral measures
in \S~\ref{sec.ldp-new}. To illustrate the LDP for spectral measures, we turn to concrete examples.
Using the results from \cite{DupMed22}, for certain sequences of $W$-random graphs we work out
leading order asymptotic correction for the eigenvalues of $K_{W^n}$ conditioned on large deviations
for the edge counts in $\Gamma_{W^n}$. This examples suggest possible scenarios of how the eigenvalues
of $K_{W^n}$ can be affected by large deviations. The implications of these scenarios for the analysis of Dynamical Systems
and Graph Signal Processing are outlined in Section~\ref{sec.discuss}.

\section{Graphons, kernels, and spectral measures}\label{sec.graphons}
\setcounter{equation}{0}
\subsection{Basic spaces} \label{sec.basic}
In this section, we review the space of graphons \cite{LovaszAMS, Janson2013}
and introduce several spaces that are derived from it.

Let $\cW$ be the space of bounded symmetric measurable functions on the unit square:
$$
\cW=\left\{W\in L^\infty\left([0,1]^2\right):\quad W(x,y)=W(y,x)\;\mbox{a.e.}\right\}.
$$
The elements of $\cW$ are called \textit{kernels}. Denote
$$
\cW_0=\left\{W\in\cW:\quad 0\le W\le 1\;\mbox{a.e.}\right\}.
$$
The elements of $\cW_0$ are called \textit{graphons}.
The space $\cW$ is equipped with the cut norm \eqref{cut-norm}.

To every $W\in \cW$, there corresponds a linear operator on the Hilbert space $\cH\doteq L^2([0,1])$ defined as 
$$K_W(f)(x)\doteq \int_0^1 W(x,y)f(y)\, dy, \mbox{ for } f\in L^2([0,1]), x\in [0,1].$$ 
Since $W$ is real-valued, bounded, and symmetric, the  operator $K_W$ is compact and self-adjoint  \cite{Young-Hilbert}. 
Thus, the spectrum of $K_W$ is a countable subset of ${\mathbb R}$, with the only possible accumulation point at 0. 
The point spectrum of $K_W$, denoted by $\sigma_p(K_W)$, is the collection of all eigenvalues of $K_W$. 
Since $K_W$ is a compact operator, every nonzero element of the spectrum belongs to $\sigma_p(K_W)$. 
The spectrum of a graphon $W\in\W_0$ is defined to be the spectrum of the corresponding operator $K_W$. We label this spectrum as follows:
\be\lbl{eigW}
1\ge \lambda_{1}\ge \lambda_{2}\ge \dots\ge \lambda_0=0\ge \dots \ge \lambda_{-2}\ge \lambda_{-1}\ge -1.
\ee
In the above list, repeated eigenvalues are listed separately. Since $W\in\cW_0$,  the eigenvalues do not exceed $1$ in the absolute
value.  Further, if the sequence of nonnegative eigenvalues is finite, by default, it is  extended by adding an infinite sequence
of zeros at the end. We deal similarly with the negative eigenvalues, so that $\lambda_j$ is well-defined for every
$j\in\dot\Z\doteq\Z\setminus\{0\}$. 
\footnote{This arrangement is important for our discussion of the convergence of spectra for converging sequences of dense graphs.}
If $K_W$ has $0$ as an eigenvalue, we will denote it as $\lambda_0$ in the above sequence. 
The corresponding sequence of normalized
eigenvectors 
$\{f_i\}_{i\in \Z\,  \& \, \lambda_i\in\sigma_p(K_W)}$ forms an orthonormal basis for  $\cH$; the subset $\{f_i\}_{i\in \dot\Z\,  \& \, \lambda_i\in\sigma_p(K_W)}$ forms an orthonormal basis for $\ker(K_W)^\perp$.  
Clearly, such bases are not unique.

Having fixed an orthonormal eigenbasis, for every $\lambda\in \sigma_p(K_W)$, the  orthogonal projection $P_W(\lambda)$  on the $\lambda$-eigenspace of $K_W$ is defined as
$$P_W(\lambda) =\sum_{j\in\Z:~\lambda_j=\lambda} f_j\otimes f_j,$$
where $(f_j\otimes f_j)(f)= \langle f, f_j\rangle f_j$ and $\langle \cdot,\cdot\rangle$ is the inner product in $\cH$.
We now define the spectral measure corresponding to $K_W$ as 
$$
P_W(A) =\sum_{\lambda\in\sigma(K_W)\cap A} P_W(\lambda),\qquad
 A\in \cB([0,1]),
$$
where $\cB([0,1])$ stands for the Borel
$\sigma$-algebra of subsets of $[0,1]$, and the sum in the definition of $P_W(A)$, if infinite, is interpreted as convergence in the strong operator topology.
Clearly, the spectral measure $P_W$ is independent of the choice of the eigenbasis for $K_W$. 

\subsection{Vague convergence}\label{sec.vague}

Let $\cM$ stand for the set of spectral measures corresponding to $\{K_W:\; W\in\cW_0\}$.
On $\cM$ we define vague convergence as follows.

\begin{df}\label{def.vague} A net of spectral measures $\{P_\gamma\}_{\gamma\in I}\subseteq\cM$ is said to
  converge to $P\in\cM$ vaguely,  denoted
  $P_\gamma\stackrel{v}{\longrightarrow} P$, if there is a dense set $D\subset\R$
  such that for any $a, b\in D$
\begin{equation*}
  \lim_{\gamma\in I} P_\gamma \left((a,b]\right)=P\left((a,b]\right),
\end{equation*}
where the limit is in the operator norm.
\end{df}

The space of bounded Borel measurable functions on $\R$,
denoted by $B(\R)$, is a commutative C$^*$-algebra under the pointwise algebra operations, complex conjugation,
and uniform norm.
Since every $f\in B(\R)$ is a uniform limit of simple functions, for every $P\in\cM$ we can define $\int f dP$ as the limit (in the operator norm) of finite sums.
Moreover, the map $f\mapsto \int f dP$ is a $*$-homomorphism from $B(\R)$ to the C$^*$-algebra
$\cL(\cH)$ of bounded linear operators on $\cH$. As a result integration is contractive, i.e.,
$\|\int f dP\|\leq \|f\|_\sup$
(see Theorem 1.43 and Proposition 1.42(b) of \cite{1995:Folland:HarmonicAnalysis}).

The following theorem gives an equivalent characterization of vague convergence.
\begin{thm}\label{thm:def2-vague}
  $\{P_\gamma\}_{\gamma\in I}\subseteq \cM$ converges to  $P$ vaguely iff 
\begin{equation}\label{eq:vague-def2}
\lim_{\gamma\in I} \int_{\R} f \, dP_\gamma = \int_{\R} f \, dP \qquad \forall f\in C_c(\R),
\end{equation}
where the limit is taken with respect to the operator norm.
\end{thm}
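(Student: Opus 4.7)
The plan is to prove both implications using standard approximation arguments, exploiting the contractivity $\|\int f\,dP\|\leq \|f\|_\sup$ and the monotonicity of the positive spectral integral throughout.

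Forward direction ($\Rightarrow$): Suppose $P_\gamma \stackrel{v}{\longrightarrow} P$ with dense set $D\subset\R$. Given $f\in C_c(\R)$ with support in $[-M,M]$ and $\epsilon>0$, I would choose a partition $a_0<a_1<\cdots<a_N$ with each $a_i\in D$, $a_0<-M$, $a_N>M$, and mesh small enough (using uniform continuity of $f$ and density of $D$) that the step function $g\doteq \sum_{i=1}^N f(a_i)\chi_{(a_{i-1},a_i]}$ satisfies $\|f-g\|_\sup\leq\epsilon$. Contractivity gives $\|\int(f-g)\,dP_\gamma\|\leq\epsilon$ and $\|\int(f-g)\,dP\|\leq\epsilon$, while Definition~\ref{def.vague} and linearity yield $\int g\,dP_\gamma=\sum_i f(a_i)\,P_\gamma((a_{i-1},a_i])\to \int g\,dP$ in operator norm. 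A triangle inequality closes this direction.

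Reverse direction ($\Leftarrow$): I would take $D=\R\setminus\sigma(K_W)$, which is open and dense because $\sigma(K_W)$ is countable with $0$ as its only possible accumulation point. Fix $a<b$ in $D$ and, for each $\delta>0$, introduce trapezoidal functions $\phi_\delta,\psi_\delta\in C_c(\R)$ satisfying $0\leq \phi_\delta\leq \chi_{(a,b]}\leq \psi_\delta\leq 1$, with $\phi_\delta=1$ on $[a+\delta,b-\delta]$ and supported in $(a,b)$, and $\psi_\delta=1$ on $[a,b]$ and supported in $[a-\delta,b+\delta]$. Monotonicity of the spectral integral (a consequence of the $\ast$-homomorphism property) gives
\[
0\leq P_\gamma((a,b])-\int\phi_\delta\,dP_\gamma \leq \int(\psi_\delta-\phi_\delta)\,dP_\gamma,
\]
along with the analogous bound for $P$; since $0\leq A\leq B$ implies $\|A\|\leq \|B\|$ for positive operators, the first difference is controlled in operator norm by $\|\int(\psi_\delta-\phi_\delta)\,dP_\gamma\|$.

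The key observation is that $\psi_\delta-\phi_\delta$ is supported in $[a-\delta,a+\delta]\cup[b-\delta,b+\delta]$, and since $a,b\notin\sigma(K_W)$ and $\sigma(K_W)$ is closed, this support is disjoint from $\sigma(K_W)$ once $\delta$ is sufficiently small. For such $\delta$ we have $\int(\psi_\delta-\phi_\delta)\,dP=0$ and hence $P((a,b])=\int\phi_\delta\,dP$. The hypothesis then gives $\int\phi_\delta\,dP_\gamma\to P((a,b])$ and $\int(\psi_\delta-\phi_\delta)\,dP_\gamma\to 0$ in operator norm, and a final triangle inequality yields $\|P_\gamma((a,b])-P((a,b])\|\to 0$. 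The main technical obstacle is that $\|\psi_\delta-\phi_\delta\|_\sup=1$ uniformly in $\delta$, so contractivity alone cannot make this error small; the resolution is to exploit that $P$ is identically zero on a neighborhood of any point outside $\sigma(K_W)$, which is precisely why $D$ must be chosen to avoid the entire spectrum rather than merely the set of eigenvalues.
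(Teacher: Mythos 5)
Your forward direction is essentially the paper's argument: uniformly approximate $f\in C_c(\R)$ by a step function with jump points in $D$, use contractivity on the error, and pass the limit through the finite sum via Definition~\ref{def.vague}. That part is the same.

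Your reverse direction is correct but takes a genuinely different route. The paper first proves a spectral-avoidance lemma: for each $a\in D=\R\setminus\sigma(K_W)$ it constructs a bump $f$ equal to $1$ on $(a-\delta_a,a+\delta_a)$ and supported off $\sigma(K_W)$, observes $\int f\,dP=0$, and then uses the $*$-homomorphism identity
$\bigl(\int f\,dP_\gamma\bigr)\bigl(\int\chi_{(a-\delta_a,a+\delta_a)}\,dP_\gamma\bigr)=\int\chi_{(a-\delta_a,a+\delta_a)}\,dP_\gamma$
to show that if $\sigma(K_{W_\gamma})$ ever meets $(a-\delta_a,a+\delta_a)$ then $\|\int f\,dP_\gamma\|\geq 1$; since $\int f\,dP_\gamma\to 0$, this cannot happen eventually. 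Having shown the approximating spectra \emph{exactly} avoid small neighborhoods of $a$ and $b$, the paper picks one compactly supported $f$ equal to $1$ on $(a,b]$ and supported in $(a-\delta_a,b+\delta_b)$ and gets the exact identities $\int f\,dP_\gamma=P_\gamma((a,b])$ and $\int f\,dP=P((a,b])$. You instead sandwich $\chi_{(a,b]}$ between trapezoids $\phi_\delta\leq\chi_{(a,b]}\leq\psi_\delta$, use positivity/monotonicity of the spectral integral and the order inequality $0\leq A\leq B\Rightarrow\|A\|\leq\|B\|$ to bound $\|P_\gamma((a,b])-\int\phi_\delta\,dP_\gamma\|$ by $\|\int(\psi_\delta-\phi_\delta)\,dP_\gamma\|$, and then send the latter to $\|\int(\psi_\delta-\phi_\delta)\,dP\|=0$ using the hypothesis once $\delta$ is small enough that $\mathrm{supp}(\psi_\delta-\phi_\delta)$ avoids $\sigma(K_W)$. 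Your argument never establishes that $\sigma(K_{W_\gamma})$ eventually avoids a fixed neighborhood of $a,b$ — it only needs the \emph{mass} $P_\gamma$ assigns there to go to zero in operator norm — which makes it a softer, Portmanteau-style proof. The tradeoff is that you must invoke operator positivity and monotonicity of the spectral functional calculus for bounded Borel (not merely continuous) functions, whereas the paper's route works with a single continuous test function and the multiplicative $*$-homomorphism property; both are available here, so both proofs go through. Your closing diagnosis — that contractivity alone cannot control $\psi_\delta-\phi_\delta$ since its sup norm is $1$, forcing the choice $D=\R\setminus\sigma(K_W)$ rather than $\R\setminus\sigma_p(K_W)$ — is exactly the right observation and is what the paper's spectral-avoidance lemma is implicitly exploiting as well.
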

\begin{proof}
Suppose $P_\gamma\stackrel{v}{\longrightarrow} P$.
Given $f\in C_c(\R)$ and  $\epsilon>0$, we can use the uniform continuity of $f$ together with the density of $D$ to find $\{a_i,b_i\}_{i=1}^k\subseteq D$ and constants $\alpha_i\in \R$ such that a linear combination of characteristic functions of $(a_i,b_i]$ provides an $\epsilon$-approximation of $f$, that is, 
$$\|f-\sum_{i=1}^k\alpha_i\chi_{(a_i,b_i]}\|_\sup<\epsilon/3.$$
Given that $P_\gamma((a_i,b_i])\to P((a_i,b_i])$ for $1\leq i\leq k$, and that integration against any measure in $\cM$ is a contractive operator, an $\epsilon/3$ argument gives \eqref{eq:vague-def2}.

Conversely, suppose that $P_\gamma\to P$ is a converging net in $\cM$ in the sense of \eqref{eq:vague-def2}. Suppose $P=P_W$ and $P_\gamma=P_{W_\gamma}$ for $W,W_\gamma\in\W_0$.
Let $D=\R\setminus \sigma(K_W)$. For $a\in D$, we can find $f\in C_c^+(\R)$ and $0<\delta_a<\rho_a<|a|$ such that $f$ is supported in $(a-\rho_a,a+\rho_a)\subseteq \R\setminus \sigma(K_W)$ and $f=1$ on $(a-\delta_a,a+\delta_a)$. Clearly, $\int fdP=0$, so by the convergence assumption, we get $\int f dP_\gamma\to 0$ in the operator norm. 
If for some $\gamma$, the set $\sigma(K_{W_\gamma})\cap (a-\delta_a,a+\delta_a)$ is nonempty, then $\|\int f dP_\gamma\|\geq1$. This follows from the fact that integration is a $*$-homomorphism, and we have
$$\left(\int f dP_\gamma\right)\left( \int \chi_{(a-\delta_a,a+\delta_a)}dP_\gamma\right)=\left(\int f \chi_{(a-\delta_a,a+\delta_a)}dP_\gamma\right)=\int \chi_{(a-\delta_a,a+\delta_a)}dP_\gamma,$$
and the final integral is a nonzero orthogonal projection on the eigenspaces  of $K_{W_\gamma}$ associated with $\lambda\in (a-\delta_a,a+\delta_a)$, so it has norm 1. Thus, 
$$1=\left\|\left(\int f dP_\gamma\right)\left( \int \chi_{(a-\delta_a,a+\delta_a)}dP_\gamma\right)\right\|\leq \left\|\int f dP_\gamma\right\|\left\|\int \chi_{(a-\delta_a,a+\delta_a)}dP_\gamma\right\|=\left\|\int f dP_\gamma\right\|.$$
This observation, combined with  $\int f dP_\gamma\to 0$, implies that $\sigma(K_{W_\gamma})\cap (a-\delta_a,a+\delta_a)=\emptyset$ holds eventually along the net.
Using this fact, we can now show that $P_\gamma\stackrel{v}{\longrightarrow} P$. Indeed, let $a<b$ be elements in  $D$, and suppose $\delta_a$ and $\delta_b$ are the positive numbers as described above. Let $f\in C_c(\R)$ be defined so that ${\rm supp}(f)\subseteq (a-\delta_a,b+\delta_b)$ and $f=1$ on $(a,b]$. Then, the above discussion implies that 
$$\int f dP=P((a,b]) \quad \mbox{and}\quad \int f dP_\gamma=P_\gamma((a,b]),$$
which finishes the proof.
\end{proof}

\begin{rem}\label{rem.topological}
  Using Theorem~\ref{thm:def2-vague}, it is easy to verify that the vague convergence satisfies the
  properties of a topological convergence, and can be used to induce a topology on $\cM$.  For a discussion on ``nets describe topologies'', see
  \cite[Section 11, Exercise 11.D]{willard}.
\end{rem}

\subsection{The cut distance and equivalence relations}\label{sec.equivalence}
Let $S$ (respectively $\bar S$) stand for the set of measure-preserving bijections (maps) from $[0,1]$ to itself.
For $W\in \cW$, $W^\phi(x,y)\doteq W\left(\phi(x),\phi(y)\right)$ is called a pullback if
$\phi\in\bar S$, or a rearrangement if $\phi\in S$. For $f\in\cH$, let 
$f^\phi$ denote $f\circ\phi$.

\begin{df}\lbl{df.cut}
  The cut distance on $\cW$ is defined by
  $$
  \delta_\Box (U,V)=\inf_{\phi\in S} \|U-V^\phi\|_\Box, \qquad U,V\in\cW.
  $$
\end{df}
The following equivalent characterizations of the cut distance are useful
(cf.~\cite[Theorem~8.13]{LovaszAMS}):
\begin{align*}
  \delta_\Box (U,V)& =\inf_{\phi\in S} \|U-V^\phi\|_\Box=\inf_{\phi\in \bar S} \|U-V^\phi\|_\Box\\
                   & = \inf_{\psi\in S} \|U^\psi-V\|_\Box=\inf_{\psi\in \bar S} \|U^\psi-V\|_\Box\\
   & = \inf_{\phi,\psi\in S} \|U^\psi-V^\phi\|_\Box=\min_{\phi, \psi\in \bar S} \|U^\psi-V^\phi\|_\Box.  
  \end{align*}

  Note that $U^\phi\neq U$ in general and thus the cut distance is only a pseudometric on $\cW$.
  To obtain a metric space, one identifies kernels with cut distance zero. Specifically, we introduce
  an equivalence relation on $\cW$:
  $$
  U\cong V \qquad \mbox{if}\qquad \delta_\Box(U,V)=0
  $$
  and define a quotient space $\widehat \cW=\cW/\cong$. The space $(\widehat \cW, \delta_\Box)$ is a metric space.
  Moreover, $(\widehat \cW_0, \delta_\Box)$ is a compact metric space (cf.~\cite{LovaszAMS}).

  Next, we define pullbacks and rearrangements for kernel operators and
  spectral measures. For the kernel operators, let
  $$
  K_W^\phi\doteq K_{W^\phi},\qquad \phi\in\bar S.
  $$
  
  \begin{lem}\label{lem:Kw-with phi}
For $\phi\in\overline{S}$ and $W\in \cW$, we have 
\begin{itemize}
\item[(i)] $ K_W^\phi f^\phi =(K_W f)^\phi$ for all $f\in \cH$.
\item[(ii)] $ K_W^\phi=0$ on the subspace $\Phi_\phi(\cH)^\perp$, where $\Phi_\phi:\cH\to \cH$ is defined as $\Phi_\phi(f)=f^\phi$. 
\end{itemize}
  \end{lem}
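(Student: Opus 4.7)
The plan is to treat the two parts directly using the defining property of measure-preserving maps, namely $\int_0^1 h(\phi(y))\,dy=\int_0^1 h(y)\,dy$ for every $h\in L^1[0,1]$ whenever $\phi\in\bar S$.

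For part (i), I would simply unfold the definitions. For $f\in\cH$ and $x\in[0,1]$,
\begin{equation*}
(K_W^\phi f^\phi)(x)=\int_0^1 W(\phi(x),\phi(y))\,f(\phi(y))\,dy.
\end{equation*}
Fix $x$ and set $h(z)\doteq W(\phi(x),z)f(z)$, which lies in $L^1[0,1]$ since $W\in L^\infty$ and $f\in L^2\subset L^1$. The measure-preserving property gives $\int_0^1 h(\phi(y))\,dy=\int_0^1 h(z)\,dz$, and the right-hand side equals $(K_W f)(\phi(x))=(K_W f)^\phi(x)$. This yields (i) pointwise a.e.\ in $x$.

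For part (ii), the observation is that for each fixed $x$ the function $y\mapsto W(\phi(x),\phi(y))$ is itself a pullback: writing $h_x(z)\doteq W(\phi(x),z)$, one has $h_x\in L^2[0,1]$ (since $W\in L^\infty$) and $W(\phi(x),\phi(\cdot))=\Phi_\phi(h_x)\in\Phi_\phi(\cH)$. Since $W$ is real-valued,
\begin{equation*}
(K_W^\phi g)(x)=\int_0^1 W(\phi(x),\phi(y))\,g(y)\,dy=\langle g,\Phi_\phi(h_x)\rangle.
\end{equation*}
If $g\in\Phi_\phi(\cH)^\perp$, then this inner product vanishes for every $x$, so $K_W^\phi g=0$.

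I expect no serious obstacle; the only subtlety worth being explicit about is that $\phi\in\bar S$ need not be a bijection, so the change of variables in (i) is really the pushforward identity $\mu\circ\phi^{-1}=\mu$, not a genuine substitution, and in (ii) one must verify that $h_x$ is in $L^2$ (which follows from $W\in L^\infty$) so that $\Phi_\phi(h_x)\in\cH$ and the orthogonality hypothesis applies. Both steps are otherwise routine one-line calculations.
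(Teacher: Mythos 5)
Your proof is correct, and part~(i) is essentially identical to the paper's argument: both unfold $K_W^\phi f^\phi$ and apply the measure-preserving identity $\int h(\phi(y))\,dy=\int h(y)\,dy$ to the $y$-variable.

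For part~(ii) you take a mildly different route. The paper proves $K_W^\phi g=0$ weakly: it computes $\langle K_W^\phi g,f\rangle$ for an arbitrary test function $f$, uses a Fubini-type rearrangement to recognize the inner integral over $x$ as a pullback $h^\phi$, and then invokes the orthogonality hypothesis $\langle g,h^\phi\rangle=0$. You instead observe that for each fixed $x$ the integrand $y\mapsto W(\phi(x),\phi(y))$ is \emph{itself} a pullback $\Phi_\phi(h_x)$ (with $h_x(z)=W(\phi(x),z)\in L^2$ since $W\in L^\infty$), so $(K_W^\phi g)(x)=\langle g,\Phi_\phi(h_x)\rangle=0$ pointwise a.e.; you correctly flag that $W$ being real-valued is what lets you identify the integral with the inner product. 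Both arguments rest on the same underlying idea — recognizing a pullback inside the integral and applying orthogonality — but yours works at the level of the function values of $K_W^\phi g$ rather than its pairings against test functions, which avoids the interchange of integration order and is arguably a touch more direct. Your closing remark about $\bar S$ not consisting of bijections (so the change of variables is really the pushforward identity, not a substitution) is exactly the right caution, and it applies equally to the paper's proof.
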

  \begin{proof}
  Part (i) is a direct consequence of the fact that $\phi$ is measure-preserving, and leaves integration invariant. So
  $$K_W^\phi f^\phi(x)=\int W^\phi(x,y)f^\phi(y)\, dy=\int W(\phi(x),\phi(y))f(\phi(y))\, dy=
  \int W(\phi(x),y)f(y)\, dy=(K_W f)^\phi(x).$$
  For part (ii), suppose $g\in \cH$ such that $\langle g, f^\phi\rangle=0$ for all $f\in \cH$. Then 
\begin{eqnarray*}
  \langle K_W^\phi g, f\rangle&=&\iint W^\phi(x,y)g(y)\overline{f(x)}\, dy dx
                                  =\int g(y)\overline{\left(\int W^\phi(x,y)f(x)\, dx\right)} \, dy\\
&=&\int g(y)\overline{h^{\phi}(y)} \, dy=0,
\end{eqnarray*}
where the new function $h$ is defined as $h(x)=\int W(\phi(z),x)f(z)dz$.
 \end{proof}
  
  Extending this logic to
  the space of bounded linear operators $\cL(\cH)$, for $B\in \cL(\cH)$ and  $\phi\in\bar S$,
  we define $B^\phi$ by the following equation
 \begin{equation}\label{pullB}
   B^\phi f^\phi\doteq (B f)^\phi,\quad  \mbox{ and } \quad  B^\phi=0\; \mbox{ on } \;\Phi_\phi(\cH)^\perp.
 \end{equation}

 \begin{lem} \label{lem:general pullback}
 Equation \eqref{pullB} uniquely defines a bounded linear operator $B^\phi$.
  \end{lem}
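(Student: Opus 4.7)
The plan is to show that the two prescriptions in \eqref{pullB} define $B^\phi$ consistently on the closed subspace $\Phi_\phi(\cH)$ and on its orthogonal complement, and then to combine them by the orthogonal decomposition $\cH=\Phi_\phi(\cH)\oplus\Phi_\phi(\cH)^\perp$. The starting observation is that, because $\phi\in\bar S$ is measure-preserving, the change-of-variables formula gives
\[
\|\Phi_\phi(f)\|_2^2=\int_0^1 |f(\phi(x))|^2\,dx=\int_0^1|f(y)|^2\,dy=\|f\|_2^2,
\]
so $\Phi_\phi:\cH\to\cH$ is a linear isometry. In particular $\Phi_\phi$ is injective and its range $\Phi_\phi(\cH)$ is a closed subspace of $\cH$.

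Using injectivity of $\Phi_\phi$, I would first check that the assignment
\[
B^\phi:\Phi_\phi(\cH)\to\cH,\qquad B^\phi(f^\phi)\doteq (Bf)^\phi
\]
is well-defined: if $f_1^\phi=f_2^\phi$ in $\cH$, the isometry property forces $f_1=f_2$ in $\cH$, hence $Bf_1=Bf_2$ and $(Bf_1)^\phi=(Bf_2)^\phi$. Linearity on $\Phi_\phi(\cH)$ then follows from the linearity of $\Phi_\phi$ and $B$. For boundedness, the isometry property applied twice gives
\[
\|B^\phi(f^\phi)\|=\|(Bf)^\phi\|=\|Bf\|\le\|B\|\,\|f\|=\|B\|\,\|f^\phi\|,
\]
so $B^\phi$ is bounded on $\Phi_\phi(\cH)$ with norm at most $\|B\|$.

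Next I would extend $B^\phi$ to all of $\cH$ via the orthogonal projection $\Pi:\cH\to\Phi_\phi(\cH)$ (which exists since $\Phi_\phi(\cH)$ is closed), defining $B^\phi h\doteq B^\phi(\Pi h)$ for every $h\in\cH$. This extension is linear, agrees with the original prescription on $\Phi_\phi(\cH)$, and vanishes on $\Phi_\phi(\cH)^\perp$, thereby matching both requirements in \eqref{pullB}. Since $\|\Pi\|\le 1$, the extension satisfies $\|B^\phi\|\le\|B\|$, so it lies in $\cL(\cH)$.

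Uniqueness is immediate: any linear operator satisfying \eqref{pullB} must coincide with the operator just constructed on each of the two summands of $\cH=\Phi_\phi(\cH)\oplus\Phi_\phi(\cH)^\perp$, and hence on all of $\cH$. The main (minor) subtlety I expect is precisely the well-definedness step on $\Phi_\phi(\cH)$, which is what makes the isometry property of $\Phi_\phi$ essential; the rest is routine Hilbert-space bookkeeping.
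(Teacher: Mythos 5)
Your proof is correct and follows essentially the same route as the paper's: observe that $\Phi_\phi$ is a linear isometry (so it is injective with closed range), decompose $\cH=\Phi_\phi(\cH)\oplus\Phi_\phi(\cH)^\perp$, and use injectivity to verify well-definedness of $B^\phi$ on the range. Your write-up is slightly more detailed (explicit norm bound $\|B^\phi\|\le\|B\|$ and a spelled-out uniqueness step), but the key idea and structure coincide with the paper's argument.
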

   \begin{proof} 
     Fix $\phi\in\overline{S}$, and recall that $\Phi_\phi:\cH\to \cH$ is defined as $\Phi_\phi(f)=f^\phi$.
     Clearly, $\Phi_\phi$ is a linear operator, and since $\phi$ is measure-preserving,
     the operator $\Phi_\phi$ is an isometry. 
  Therefore, the range $\Phi_\phi(\cH)$ is a closed subspace of $\cH$, and we have the orthogonal decomposition $\cH=\Phi_\phi(\cH)\oplus  \Phi_\phi(\cH)^\perp$. 
Given a bounded linear  operator $B\in \cL(\cH)$, the operator $B^\phi:\cH\to \cH$ is defined as follows
$$ B^\phi(\Phi_\phi (f))=\Phi_\phi(Bf), \forall f\in \cH,\quad \mbox{ and } \quad B^\phi=0 \mbox{ on }
\Phi_\phi(\cH)^\perp.$$
The linearity and boundedness of $B^\phi$ are clear. We only need to show that $B^\phi$ is well-defined: taking $f,g\in \cH$ with $f^\phi=g^\phi$, we need to show that $\Phi_\phi(Bf)=\Phi_\phi(Bg)$ as well. However, since $\Phi_\phi$ is an isometry, $f^\phi=g^\phi$ implies that $f=g$, which finishes the proof. 
   \end{proof}
   

We now turn to spectral measures.
The corresponding equivalence relation on $\cM$ can be established using  a $1-1$ correspondence between
graphons and spectral measures:
    \begin{align*}
      W & \mapsto P_W,\\
      P_W&\mapsto W=\sum_{\lambda_i\in \sigma(K_W)} \lambda_i P_W(\{\lambda_i\}).
    \end{align*}
    Thus, one can define $P\cong Q$ if $\exists \, W, V\in \W_0:\; P=P_W$ and  $Q=P_V$ such that  $\delta_\Box (W,V)=0$.
    We can now define the quotient space $\widehat\cM=\cM/\cong$.

Let $F:\W_0\to \cM$ be the map $W\mapsto P_W$, and define $\widehat F:\widehat\cW_0\to\widehat\cM$ as the map induced on the corresponding quotient spaces. 
Clearly, the map $\widehat{F}$ is well-defined.

The following proposition clarifies how the equivalence relation on $\cM$ relates to measure-preserving maps in a more direct way.
\begin{proposition}\label{prop:M-equiv}
For every $W\in \W_0$ and $\phi\in\bar{S}$, we have $P_{W^\phi}(A)=(P_W(A))^\phi$ for every $A\in\cB([0,1])$ \red{with $0\not\in A$. 
Moreover, $P_{W^\phi}(\{0\})$ is just the sum of $(P_{W}(\{0\}))^{\phi}$ and the orthogonal projection on $\Phi_\phi(\cH)$.} 
Consequently, for every $P,Q\in \cM$, we have $P\cong Q$ if and only if there exists
$\phi,\psi\in \bar{S}$ such that $(P(A))^\phi=(Q(A))^\psi$ for every $A\in\cB([0,1])$.
\end{proposition}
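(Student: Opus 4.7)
The plan is to lift the action of $K_{W^\phi}$ described in Lemma~\ref{lem:Kw-with phi} to the level of spectral projections, and then to combine this with the $1$--$1$ correspondence $W\leftrightarrow P_W$ recalled just before the proposition. The key input is that $K_{W^\phi}$ acts on $\Phi_\phi(\cH)$ as the conjugate of $K_W$ by the isometry $\Phi_\phi$, and vanishes identically on $\Phi_\phi(\cH)^\perp$.

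First, fix $\lambda\in\R\setminus\{0\}$. I will show that the $\lambda$-eigenspace of $K_{W^\phi}$ coincides with $\Phi_\phi(E_\lambda(K_W))$, where $E_\lambda(K_W)$ denotes the $\lambda$-eigenspace of $K_W$. The inclusion $\supseteq$ is immediate from Lemma~\ref{lem:Kw-with phi}(i): $K_W f=\lambda f$ implies $K_{W^\phi}f^\phi=(K_Wf)^\phi=\lambda f^\phi$. For $\subseteq$, given $g$ with $K_{W^\phi}g=\lambda g$, decompose $g=g_1+g_2$ along $\cH=\Phi_\phi(\cH)\oplus\Phi_\phi(\cH)^\perp$; because $K_{W^\phi}$ preserves $\Phi_\phi(\cH)$ by the construction in Lemma~\ref{lem:general pullback} and annihilates $\Phi_\phi(\cH)^\perp$ by Lemma~\ref{lem:Kw-with phi}(ii), the eigenvalue equation with $\lambda\neq 0$ forces $g_2=0$, and writing $g=h^\phi$ and invoking that $\Phi_\phi$ is an isometry yields $K_W h=\lambda h$. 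The orthogonal projection onto $E_\lambda(K_{W^\phi})$ then agrees with $(P_W(\{\lambda\}))^\phi$ in the sense of \eqref{pullB}, since both operators act as $f^\phi\mapsto (P_W(\{\lambda\})f)^\phi$ on $\Phi_\phi(\cH)$ and vanish on $\Phi_\phi(\cH)^\perp$. Summing over $\lambda\in\sigma(K_W)\cap A$ for an arbitrary Borel $A\not\ni 0$, and using that the pullback $B\mapsto B^\phi$ is additive and continuous in the strong operator topology on mutually orthogonal projections, yields $P_{W^\phi}(A)=(P_W(A))^\phi$. For $A\ni 0$, the same argument together with Lemma~\ref{lem:Kw-with phi}(ii) splits the $0$-eigenspace of $K_{W^\phi}$ as $\Phi_\phi(\ker K_W)\oplus\Phi_\phi(\cH)^\perp$, which gives the stated decomposition of $P_{W^\phi}(\{0\})$.

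For the equivalence characterization, the $1$--$1$ correspondence $W\leftrightarrow P_W$ says that $P\cong Q$ means $P=P_W$ and $Q=P_V$ for some $W,V\in\W_0$ with $\delta_\Box(W,V)=0$. By the equivalent formulations of the cut distance listed just after Definition~\ref{df.cut}, this is the same as the existence of $\phi,\psi\in\bar S$ with $\|W^\psi-V^\phi\|_\Box=0$, hence $W^\psi=V^\phi$ almost everywhere and $K_{W^\psi}=K_{V^\phi}$ as operators, whence $P_{W^\psi}=P_{V^\phi}$ as operator-valued measures. Applying the first part of the proposition to both sides then gives $(P_W(A))^\psi=(P_V(A))^\phi$ for every $A\not\ni 0$, while for $A\ni 0$ the identity $P_{W^\psi}(\{0\})=P_{V^\phi}(\{0\})$ combined with the correction formula yields the corresponding equality. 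The converse direction is obtained by reading the same chain of implications in reverse. The main obstacle is precisely the role of $\{0\}$: for $\phi\in\bar S\setminus S$ the isometry $\Phi_\phi$ need not be surjective, so $\Phi_\phi(\cH)^\perp$ contributes a genuine extra summand to $\ker K_{W^\phi}$ without an analogue in $\ker K_W$, and all the bookkeeping in the nonzero-eigenvalue argument and in the matching of $\{0\}$-correction terms must respect this asymmetry.
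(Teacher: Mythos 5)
Your proof is correct and follows essentially the same route as the paper's: identify the nonzero eigenspaces of $K_{W^\phi}$ with $\Phi_\phi$ applied to those of $K_W$ via Lemma~\ref{lem:Kw-with phi}, treat the $0$-eigenspace separately with the extra $\Phi_\phi(\cH)^\perp$ summand, and then transport the cut-distance characterization of $\cong$ through the graphon--spectral-measure correspondence. The only cosmetic difference is that you argue directly with eigenspaces and the abstract pullback \eqref{pullB}, whereas the paper fixes an orthonormal eigenbasis and works with the rank-one projections $f_j\otimes f_j$.
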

\begin{proof}
For a graphon $W\in\cW_0$, let $\{f_j\}_j$ be an orthonormal basis for $\cH$ of eigenvectors of $K_W$ associated with eigenvalues $\{\lambda_j\}$.  By Lemma~\ref{lem:Kw-with phi} (i), 
$$K_W^\phi(f_j^\phi)=(K_Wf_j)^{\phi}=\lambda_jf_j^\phi.$$
So $\{f_j^\phi\}_j$ forms a set of eigenvectors of $K_{W^\phi}$ associated with $\lambda_j$. Since $\Phi_\phi$ is an
isometry, the set $\{f_j^\phi\}_j$ is an orthonormal set as well.  By Lemma~\ref{lem:Kw-with phi}, every
$\lambda$-eigenvector of $K_{W}^\phi$ with $\lambda\neq 0$ must be of the form $f^\phi$ for some $f\in \cH$.
Let $f=\sum_{j\in \Z} \alpha_j f_j$ be the expansion of $f$ with respect to the basis $\{f_j\}_j$. Then, $f^\phi=\sum_{j\in \Z} \alpha_j {f_j}^{\phi}$. Thus, 
$\{f_j^\phi\}_j$  contains an orthonormal basis for every eigenspace of $K_W^\phi$ associated with nonzero eigenvalues. So,
$$P_{W^\phi}(\lambda) =\sum_{j:~\lambda_j=\lambda}  f_j^\phi \otimes  f_j^\phi, \quad \mbox{ when }\lambda\neq 0.$$
It is easy to see that $(f_j\otimes f_j)^\phi=f_j^\phi \otimes  f_j^\phi$ (by equation \eqref{pullB} for example). So $P_{W^\phi}(\lambda)=(P_{W}(\lambda))^{\phi}$ when $\lambda\neq 0$.

\red{To deal with the case $\lambda=0$, note that if $K_W^\phi(f^\phi)=0$, then $(K_Wf)^{\phi}=0$, which implies that $K_Wf=0$.
Using this fact, together with Lemma~\ref{lem:Kw-with phi} (ii), we have $P_{W^\phi}(0)=Q_1 + Q_2$ where $Q_1$ and $Q_2$ are the orthogonal projections on 
the closed subspaces $\Phi_\phi(\ker(K_W))$ and  $\Phi_\phi(\cH)^\perp$ respectively. Using a similar argument, it is easy to verify that $Q_1=(P_{W}(0))^\phi$.
Therefore, the spectral measure associated with $K_{W}^{\phi}$ is given as follows. For  $A\in\cB([0,1])$ with $0\not\in A$, 
$$P_{W^\phi}(A) =\sum_{\lambda\in\sigma(K_W^\phi)\cap A} P_{W^\phi}(\lambda)=\sum_{\lambda\in\sigma(K_W)\cap A} P_{W^\phi}(\lambda)=\sum_{\lambda\in\sigma(K_W)\cap A} (P_W(\lambda))^\phi=(P_{W}(A))^{\phi},$$
and for $A\in\cB([0,1])$ with $0\in A$, we have $P_{W^\phi}(A) =(P_{W}(A))^{\phi}+Q_2$.
This proves the first part of the proposition.}

Next, suppose  $P=P_W$ and $Q=P_V$ are elements of $\cM$ with $W,V\in \W_0$. By definition, $P\cong Q$ holds precisely when $\delta_\Box(W,V)=0$, which in turn means $W^\phi=V^\psi$ for some $\phi,\psi\in \bar{S}$. Appealing to the first part of this proposition, for every $A\in\cB([0,1])$ with $0\not\in A$, we have
$$(P(A))^\phi=(P_W(A))^\phi=P_{W^\phi}(A)=P_{V^\psi}(A)= (P_V(A))^\psi=(Q(A))^\psi.$$
\red{On the other hand, we have 
$$(P(\{0\}))^\phi=(P_W(\{0\}))^\phi=P_{W^\phi}(\{0\})-Q_2=P_{V^\psi}(\{0\})-Q_2= (P_V(\{0\}))^\psi=(Q(\{0\}))^\psi.$$
This proves the ``only if'' direction of the statement. To prove the ``if'' direction, suppose $(P(A))^\phi=(Q(A))^\psi$ for every $A\in\cB([0,1])$. With a similar argument as before, this assumption implies that $P_{W^\phi}(\{\lambda\})=P_{W^\psi}(\{\lambda\})$ for every $\lambda\in\bbR$. We then use  the $1-1$ correspondence between kernels and spectral measures to conclude that $W^\phi=V^\psi$, which finishes the proof.}
\end{proof}

\section{Continuous dependence}\label{sec.continuous}
\setcounter{equation}{0}

In this section, we prove continuous dependence of spectral measures on the corresponding
graphons. To this end, we  need to specify a topology on  $\widehat\cM$.
This is done by extending the notion of vague convergence to the quotient   space.

    \begin{df}\label{def.hat-vague}
   A net   $\{\widehat{P}_\gamma\}_{\gamma\in I}\subseteq \widehat\cM$  is said to converge to
  $\widehat P\in\widehat{\cM}$ vaguely,  if there exist \red{$W_\gamma\in \cW_0$,} $\phi_\gamma\in S$  and a dense set $D\subset\R$
  such that \red{$P_\gamma=P_{W_\gamma}$ and} for any $a, b\in D$
  $$
  \lim_{\gamma\in I}\red{P_{{W_\gamma}^{\phi_\gamma}}}(a,b]= P(a,b], 
  $$
  with limit taken in operator norm.
\end{df}

\begin{thm}\label{thm.cont}
$\widehat F:\widehat\cW\to\widehat\cM$ is continuous.
\end{thm}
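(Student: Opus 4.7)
The plan is to bootstrap vague convergence of the spectral measures from cut-norm convergence of the graphons, via operator-norm convergence of the associated integral operators plus continuity of the functional calculus.

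First, suppose $\widehat W_\gamma\to\widehat W$ in $(\widehat\cW_0,\delta_\Box)$ and fix any representatives $W_\gamma,W\in\cW_0$, so that $\delta_\Box(W_\gamma,W)\to 0$. The infimum-over-$S$ formula of Definition~\ref{df.cut} allows me to select, for each $\gamma$, a measure-preserving bijection $\phi_\gamma\in S$ with $\|W_\gamma^{\phi_\gamma}-W\|_\Box\to 0$. Since $W_\gamma^{\phi_\gamma}$ still represents $\widehat W_\gamma$, Definition~\ref{def.hat-vague} reduces the theorem to proving that $P_{W_\gamma^{\phi_\gamma}}\stackrel{v}{\longrightarrow} P_W$ in $\cM$.

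The crux is to promote cut-norm convergence of the kernels to operator-norm convergence of the integral operators on $\cH$. For symmetric kernels with $\|U\|_\infty\le 1$ there is a standard estimate of the form $\|K_U\|_{\op}\le c\,\|U\|_\Box^{1/2}$ (Frieze--Kannan-type cut decomposition; cf.\ the cut-norm/operator-norm comparisons in \cite{LovaszAMS, Sze-2011}), applied here to the symmetric difference $U=W_\gamma^{\phi_\gamma}-W$, which takes values in $[-1,1]$. This yields $\|K_{W_\gamma^{\phi_\gamma}}-K_W\|_{\op}\to 0$. Because each $K_{W_\gamma^{\phi_\gamma}}$ and $K_W$ is a compact self-adjoint operator with spectrum in $[-1,1]$, the continuous functional calculus is norm continuous on this uniformly bounded family: for every $f\in C_c(\R)$,
$$
\bigl\|f(K_{W_\gamma^{\phi_\gamma}})-f(K_W)\bigr\|_{\op}\longrightarrow 0,
$$
equivalently $\int f\,dP_{W_\gamma^{\phi_\gamma}}\to\int f\,dP_W$ in operator norm. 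Theorem~\ref{thm:def2-vague} then upgrades this to $P_{W_\gamma^{\phi_\gamma}}\stackrel{v}{\longrightarrow}P_W$, furnishing the dense set $D\subset\R$ and the projection convergence $P_{W_\gamma^{\phi_\gamma}}(a,b]\to P_W(a,b]$ on $D$ required by Definition~\ref{def.hat-vague}; hence $\widehat F(\widehat W_\gamma)\to \widehat F(\widehat W)$ vaguely in $\widehat\cM$.

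The main technical obstacle is the cut-norm-to-operator-norm passage for the kernel operators. The opening step is essentially soft (just unwinding the quotient via the characterization of $\delta_\Box$), and so is the closing step, which combines norm continuity of the functional calculus for self-adjoint compact operators with the $C_c(\R)$-reformulation of vague convergence provided by Theorem~\ref{thm:def2-vague}.
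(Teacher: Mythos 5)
Your proof is correct and takes a genuinely different route from the paper's. Both arguments begin identically, choosing $\phi_n\in S$ so that $\|W_n^{\phi_n}-W\|_\Box\to 0$. The paper then proves $P_{W_n^{\phi_n}}\stackrel{v}{\longrightarrow}P_W$ via Lemma~\ref{lem.converge} by tracking individual eigenpairs: it cites Szegedy's pointwise eigenvalue convergence $\lambda_j^n\to\lambda_j$ and \cite[Lemma~1.10]{Sze-2011} for $L^2$-convergence of eigenvectors (after passing to subsequences), then assembles the finite-rank spectral projections $P_n((a,b])$ by hand for $a,b\notin\sigma(K_W)$, treating the signs of $a,b$ separately. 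You instead upgrade cut-norm convergence of the kernels to operator-norm convergence of the integral operators themselves; the estimate you invoke is valid, and can be obtained either by Riesz--Thorin interpolation between $\|K_U\|_{L^1\to L^\infty}\le\|U\|_\infty$ and $\|K_U\|_{L^\infty\to L^1}\le c\|U\|_\Box$ (giving the exponent $1/2$ you quote), or from $\|K_U\|_{\op}^4\le\operatorname{tr}(K_U^4)=t(C_4,U)\le\|U\|_\Box$ (giving exponent $1/4$, which would also suffice); either way, applied to $U=W_n^{\phi_n}-W$ with $\|U\|_\infty\le 1$, it gives $\|K_{W_n^{\phi_n}}-K_W\|_{\op}\to 0$. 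Norm-continuity of the continuous functional calculus on a uniformly bounded family of self-adjoint operators then yields $\int f\,dP_{W_n^{\phi_n}}\to\int f\,dP_W$ for $f\in C_c(\R)$, and Theorem~\ref{thm:def2-vague} closes the loop, supplying $D=\R\setminus\sigma(K_W)$, which automatically excludes $0$ (since $0\in\sigma(K_W)$ always), the very caveat the paper flags explicitly. Your route is more self-contained, yields the strictly stronger conclusion that the operators converge in norm, and avoids the eigenvalue/eigenvector bookkeeping and subsequence extraction; the paper's approach has the mild advantage of keeping explicit contact with the eigenpairs, which it later revisits in Section~\ref{sec.example}. One small presentational point: the inequality is less a consequence of the Frieze--Kannan decomposition than of interpolation or the $C_4$-counting lemma, so it would be cleaner to cite one of those directly.
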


The proof of the theorem relies on the following lemma, which is of independent interest.

\begin{lem}\lbl{lem.converge}
Suppose $\{W_n\}$ is a sequence of graphons from $\cW_0$ converging to $W$ in the cut norm.
Let $K_{W_n}f=\int W_n(\cdot,y)f(y)dy$ and denote the eigenvalues, normalized
eigenvectors and the corresponding spectral measure of $K_{W_n}$
by $\{\lambda_{j}^n\}$, $\{f_{n,j}\}$, and $P_n:=P_{W_n}$ respectively. In addition, let
$P:=P_{W}$.

Then for any $\red{0\neq} a<b\le \infty$ such that $\{a,b\}\bigcap \sigma(K_W)=\emptyset$,
  $P_n\left((a,b]\right)$ tends to $P\left((a,b]\right)$ in the operator norm.
\end{lem}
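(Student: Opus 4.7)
The plan is to invoke the convergence of eigenvalues and of spectral projections onto eigenspaces established in \cite{Sze-2011} for cut-norm convergent graphon sequences, and to combine this with a case analysis on the position of $0$ relative to $(a,b]$. I first record that, since $K_W$ is compact and self-adjoint on $\cH$, its spectrum lies in $[-1,1]$ with $0$ as the only possible accumulation point, and that $\sigma(K_W)$ is closed. Combined with $\{a,b\}\cap\sigma(K_W)=\emptyset$, this yields $\epsilon>0$ such that the strips $(a-2\epsilon,a+2\epsilon)$ and, when $b<\infty$, $(b-2\epsilon,b+2\epsilon)$ are disjoint from $\sigma(K_W)$. By the eigenvalue convergence of \cite{Sze-2011}, for all sufficiently large $n$ these strips are also disjoint from $\sigma(K_{W_n})$, so $a$ and $b$ lie in the resolvent sets of $K_W$ and $K_{W_n}$ simultaneously.

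The core case to dispose of first is when $(a,b]$ is bounded away from $0$, i.e.\ when $a>0$ (the case $b<0$ is symmetric). Then $\sigma(K_W)\cap(a,b]=\{\mu_1,\dots,\mu_k\}$ is a finite set of nonzero eigenvalues with total multiplicity $M<\infty$, and $P((a,b])=\sum_{i=1}^{k}P(\{\mu_i\})$ is a finite-rank orthogonal projection. The spectral gap established above together with eigenvalue convergence implies that, for $n$ large, $\sigma(K_{W_n})\cap(a,b]$ consists of exactly $M$ eigenvalues (counted with multiplicity), grouped into clusters converging to the $\mu_i$'s. The eigenspace-convergence part of \cite{Sze-2011} then asserts that, for each $i$, the orthogonal projection onto the span of the $K_{W_n}$-eigenvectors corresponding to the cluster near $\mu_i$ converges to $P(\{\mu_i\})$ in operator norm. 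Summing over $i$ gives $P_n((a,b])\to P((a,b])$ in operator norm.

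For the remaining case $a<0<b$, in which $(a,b]$ contains a neighborhood of $0$ and $P((a,b])$ may even have infinite rank, I would reduce to the core case by passing to the complementary projection
$$
P((a,b])=I-P((-\infty,a])-P((b,\infty)).
$$
Since $a<0$ and $b>0$ (with $b=\infty$ allowed, in which case the second term drops out), both $(-\infty,a]$ and $(b,\infty)$ are bounded away from $0$, so their intersections with $\sigma(K_W)$ are finite and the core case applies to each. Writing the analogous identity for $P_n$ and combining the two operator-norm convergences yields $P_n((a,b])\to P((a,b])$.

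The hard part of the argument is the operator-norm convergence of the cluster spectral projection in the core case. When $\mu_i$ is a repeated eigenvalue of $K_W$, or when several eigenvalues of $K_{W_n}$ split off from a single $\mu_i$ along the sequence, individual normalized eigenvectors $f_{n,j}$ need not converge, and the argument must run at the level of the joint spectral projection onto the entire cluster. This is precisely what \cite{Sze-2011} provides, via a compact-operator perturbation argument that exploits the Hilbert--Schmidt structure of $K_{W_n}-K_W$ together with the cut-norm hypothesis. A self-contained alternative would be to upgrade cut-norm convergence to operator-norm convergence $K_{W_n}\to K_W$ (using the uniform $L^\infty$ bound on $\W_0$) and then apply the Riesz contour-integral representation of each spectral projection around $\mu_i$.
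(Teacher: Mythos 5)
Your proof is correct and follows essentially the same route as the paper's: reduce to intervals bounded away from $0$, where $P\left((a,b]\right)$ has finite rank and Szegedy's convergence of eigenvalues and eigenvectors \cite{Sze-2011} yields operator-norm convergence of the cluster projections, then handle $a<0<b$ via the complementation identity $P\left((a,b]\right)=I-P\left((-\infty,a]\right)-P\left((b,\infty]\right)$. Your closing aside is also sound: for uniformly bounded kernels one has $\|K_U\|_{L^2\to L^2}\le 3\,\|U\|_\Box^{1/3}\|U\|_\infty^{2/3}$ (truncate $f$ and $g$ at level $M$ and optimize over $M$), so cut-norm convergence on $\cW_0$ does upgrade to operator-norm convergence of the integral operators, and the Riesz contour-integral argument would give a self-contained alternative to invoking \cite{Sze-2011}.
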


We first prove Theorem~\ref{thm.cont} using Lemma~\ref{lem.converge}. Then we give the proof
of the lemma.

\begin{proof}[Proof of Theorem~\ref{thm.cont}] 
Since $(\widehat\cW,\delta_\Box)$ is a metric space, it is enough to check the convergence on sequences, rather than nets.
Suppose a sequence $\{W_n\}_{n\in \N}$ converges to $W$ in $\widehat\cW$, i.e., $\delta_\Box(W_n\,W)\to 0$ as $n$ tends to $\infty$. By definition of $\delta_\Box$, we can choose 
$\phi_n\in S$ such that $\|W_n^{\phi_n}-W\|_\Box\leq 2\delta_\Box(W_n,W)$, so we have $\|W_n^{\phi_n}-W\|_\Box\to 0$. Applying Lemma~\ref{lem.converge} to the converging sequence $\{W_n^{\phi_n}\}$, we conclude that the sequence $\{P_{W_n^{\phi_n}}((a,b])\}$ converges to $P_W((a,b])$ in the operator norm for every $a,b\in \R\setminus\sigma(K_W)$. \red{Note that we always have $0\in\sigma(K_W)$, so the condition $a\neq 0$ is satisfied.} Setting $D=\R\setminus\sigma(K_W)$, we see that $\{\widehat{F}({W_n})\}$ converges to $\widehat{F}(W)$ vaguely.
 \end{proof}

\begin{proof}[Proof of Lemma~\ref{lem.converge}] 
Suppose $0<a<b\le \infty$ and $\{a,b\}\bigcap \sigma(K_W)=\emptyset$. Since $0$ is the
    only possible accumulation point of $\sigma(K_W)$, at most finite number of eigenvalues
    of $K_W$ lie in $(a,b)$. Assume that
    $$
    (a,b)\bigcap \sigma(K_W)=\left\{ \lambda_{k+1}, \lambda_{k+2},\dots, \lambda_{k+p}\right\}
    $$
    where $b>\lambda_{k+1}\ge \lambda_{k+2}\ge\dots\ge \lambda_{k+p}>a$ for some
    nonnegative integers $k$ and $p$.
Next, since $\lim_n\lambda^n_j=\lambda_j,\; \;j\in \dot\Z$ (cf.~\cite{Sze-2011}), there are $\delta>0$
    and $n_1\in\N$ such that
    \begin{align*}
      \lambda^n_j \in (a,b) & \quad\mbox{for}\quad j\in J\doteq\{k+1, k+2,\dots, k+p\},\quad n> n_1,\\
      \lambda^n_j \notin (a-\delta,b+\delta) & \quad\mbox{for}\quad j\in\dot\Z\setminus J,\quad n> n_1.
    \end{align*}
For every $j\in J$, going down to a subsequence if necessary, $f_{n,j}$  converge weakly to some $\tilde f_j$.
    By \cite[Lemma~1.10]{Sze-2011}, $\tilde f_j$ is a normalized eigenvector of $K_W$ corresponding
    to $\lambda_j$, and $f_{n,j}$  converge  to $\tilde f_{j}$ in $L^2$.
    Since $\langle f_{n,j}, f_{n,i}\rangle =0$ for every $n\in\N$ and every pair $i\neq j$,
    $i,j \in J$, the vectors $(\tilde f_j)_{ j\in J}$ are mutually orthogonal. 
    %
  So, $P_n\left((a,b]\right)$ tends to $P\left((a,b]\right)$ in the operator norm.
    The case $(a,b)\bigcap \sigma (K_W)=\emptyset$ is treated similarly. Likewise, the arguments
    above apply verbatim to the case when $-\infty\leq a<b<0$.
Finally, suppose $a<0<b\le \infty$ and $\{a,b\}\bigcap \sigma(K_W)=\emptyset$. In this case, note
    $$
    P_n \left((a,b]\right) = I-P_n \left((-\infty,a]\right) - P_n \left((b,\infty]\right)
    \longrightarrow
    I- P \left((-\infty,a]\right) - P \left((b,\infty]\right)= P \left((a,b]\right).
    $$
  \end{proof}

 \section{Large deviations}
 \label{sec.ldp}
\setcounter{equation}{0}
In this section, we review an LDP for $W$-random graphs established in \cite{DupMed22} and then
extend it to $W$-random spectral measures.

For a given $W\in\cW_0$, recall that $W_n$ (cf. \eqref{Wn}) denotes a random graphon  corresponding to a $W$-random
graph $\Gamma_{W_n}$ defined in \eqref{def-Wn-1}, \eqref{def-Wn-2}.
The $W$-random graphon $W_n$ defines the integral operator $K_{W_n}$ (cf. \eqref{int-operator})
and  the corresponding spectral measure $P_n=P_{W_n}$. The spectral measure $P_{W_n}$ is called  a $W$-random spectral measure.
Let $(\lambda^n_j)$ (resp.~$(\lambda_j)$) denote the eigenvalues of $K_{W_n}$ (resp.~$K_W$).
We have $W_n\to W$ in the cut norm almost surely (a.s.) (cf.~\cite{DupMed22}). This implies
\begin{align}\lbl{eig-converge}
  &\lim_{n\to\infty} \lambda^n_j=\lambda_j, \quad j\in\dot\Z,& \quad (\mbox{cf.~\cite{Sze-2011}}),\\
\lbl{Pn-converge}
  &P_{n}\stackrel{v}{\longrightarrow} P_W,&          \quad (\mbox{cf.~Lemma~\ref{lem.converge}}).
   \end{align}                                          

\subsection{The LDPs for random graphs and spectral measures}   \label{sec.ldp-old}
For a given $W\in\cW$, and for $\widehat V\in \widehat{\cW}$ let  
\begin{equation}\label{rate}
I(\widehat V)=\inf_{V\in\widehat{V}}\Upsilon(V,W),
\end{equation}
where $\Upsilon$ is defined by 
\begin{equation}\label{Ups}
  \begin{split}
  \Upsilon(V,W) &=\frac{1}{2}\int_{[0,1]^2}  R \left( \{ V(x,y), 1-V(x,y) \} \Vert \{ W(x,y), 1-W(x,y) \}  \right) dxdy\\
&=\frac{1}{2}\int_{[0,1]^2} \left\{  V(x,y) \log\left( {V(x,y)\over W(x,y)}\right) +
  \left( 1-V(x,y) \right) \log\left( {1-V(x,y)\over 1-W(x,y)} \right) \right\} dxdy,
\end{split}
\end{equation}
and $R(\theta\Vert\mu)$ is the relative entropy of probability measures $\theta$ and $\mu$, i.e., 
\begin{equation*}
R\left( \theta\left\Vert \mu\right.\right)=\int \left(\log {\frac{d\theta}{%
d\mu }}\right)d\theta
\end{equation*}
if $\theta \ll \mu$ and $R\left( \theta\left\Vert \mu\right.\right)=\infty$ otherwise.

\begin{thm} \cite[Theorem 4.1]{DupMed22}\;
  \label{thm.Wrandom}
Let $\{W^n\}$  be defined by \eqref{Wn}. Then
  $\{\widehat{W}^{n}\}_{n\in\mathbb{N}}$ 
satisfies the LDP
with scaling sequence $\frac{n^2}{2}$ and rate function \eqref{rate}. 
In particular, the function $I$ has compact level sets on $\widehat \cW$,
\[
\liminf_{n\rightarrow \infty}\frac{2}{n^2} \log P\{\widehat{W}^{n}\in O\} \geq - \inf_{\widehat{V} \in O}I(\widehat{V})
\]
for open $O \subset \widehat \cW$,
and 
\[
\limsup_{n\rightarrow \infty}\frac{2}{n^2} \log P\{\widehat{W}^{n}\in F\} \leq - \inf_{\widehat{V} \in F}I(\widehat{V})
\]
for closed $F \subset \widehat \cW$.
\end{thm}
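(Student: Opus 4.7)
The plan is to adapt the Chatterjee--Varadhan strategy for Erd\H{o}s--R\'enyi random graphs to the inhomogeneous $\mathbb{G}(n,W)$ setting, with the sole substantive change being that the constant relative entropy $h_p$ gets replaced by the pointwise relative entropy integrated against $W$, i.e.\ $\Upsilon(\cdot,W)$. The starting observation is that on the labeled sample space the entries $\{W^n_{ij}\}_{1\le i<j\le n}$ are independent Bernoulli variables with parameters $w^n_{ij}$, so for any symmetric 0/1 array $a=(a_{ij})$ one has
\begin{equation*}
\P(W^n=a)=\prod_{i<j}(w^n_{ij})^{a_{ij}}(1-w^n_{ij})^{1-a_{ij}}.
\end{equation*}
If $a$ realizes the block edge densities of a graphon $V$ on a $k\times k$ equipartition of $[0,1]$, then a Stirling-type calculation gives $\log\P(\cdot)\sim -\tfrac{n^2}{2}\Upsilon(V_k,W)$, where $V_k$ is the block average of $V$; since $\Upsilon(V_k,W)\to \Upsilon(V,W)$ as $k\to\infty$, the rate function in \eqref{Ups} is forced by this computation.

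For the lower bound, I would fix $\widehat V$ with $I(\widehat V)<\infty$, choose a representative $V\in\widehat V$ with $\Upsilon(V,W)<\infty$, and tilt: let $\P_V$ be the law under which the entries are independent Bernoulli with parameters $v^n_{ij}\doteq n^2\!\int_{Q^n_{ij}}V$. Under $\P_V$ the same cut-norm law of large numbers used in \cite{DupMed22} yields $\widehat W^n\to\widehat V$ almost surely, while $\tfrac{2}{n^2}\log\frac{d\P}{d\P_V}$ converges in $\P_V$-probability to $-\Upsilon(V,W)$. Truncating the log-likelihood to its typical values and combining with the a.s.\ convergence produces $\liminf\tfrac{2}{n^2}\log\P(\widehat W^n\in O)\ge -\Upsilon(V,W)$, and optimizing over representatives and over $\widehat V\in O$ finishes the lower bound.

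The upper bound rests on compactness of $(\widehat\cW_0,\delta_\Box)$, which follows from the weak regularity lemma. For closed $F\subset\widehat\cW_0$, one covers $F$ by finitely many $\delta_\Box$-balls around step-function representatives and, inside each ball, controls the probability by projecting onto a fixed $k\times k$ block partition and invoking Sanov on the resulting finite product of independent Bernoulli ensembles; the discretization error is then removed by sending $k\to\infty$ after $n\to\infty$, using continuity of $\Upsilon(\cdot,W)$ in cut norm on its effective domain. Lower semicontinuity of $I$, inherited from the variational formula \eqref{rate} and convexity of relative entropy, then upgrades the finite-cover estimate to $\limsup\tfrac{2}{n^2}\log\P(\widehat W^n\in F)\le -\inf_F I$. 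The main technical obstacle is that the LDP is asserted on the quotient $\widehat\cW$ while the probabilistic analysis lives on the labeled array: the event $\{\widehat W^n\in O\}$ unfolds as a union over all measure-preserving relabelings, and one must show these orbits contribute only an $\exp\{o(n^2)\}$ combinatorial factor so that the unlabeled and labeled LDPs coincide at the scale $n^2/2$.
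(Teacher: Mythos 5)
This theorem is quoted verbatim from \cite[Theorem~4.1]{DupMed22}; the present paper offers no proof of it, only the citation. So there is no ``paper's own proof'' to compare against here --- the correct comparison is with the argument in the Dupuis--Medvedev paper itself.

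Your sketch adapts the Chatterjee--Varadhan strategy for the homogeneous Erd\H{o}s--R\'enyi case: exponential tilting for the lower bound, and (on the upper-bound side) compactness of $(\widehat\cW_0,\delta_\Box)$ from the regularity lemma, a finite ball cover, block discretization, a Sanov-type estimate, and an $n\to\infty$ then $k\to\infty$ limit, together with the labeled-to-unlabeled reduction. That is a legitimate route and has been carried out in the literature for inhomogeneous edge probabilities. However, it is not the route taken in \cite{DupMed22}. Dupuis and Medvedev work by the weak-convergence (Laplace-principle) method: they prove, for bounded continuous $F$ on $\widehat\cW$, the limit $\tfrac{2}{n^2}\log\E[\exp(-\tfrac{n^2}{2}F(\widehat W^n))]\to -\inf_{\widehat V}\{F(\widehat V)+I(\widehat V)\}$, using the variational representation of exponential integrals in terms of relative entropy of controlled (tilted) Bernoulli ensembles, and then pass to the limit in the resulting stochastic control problem. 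Compactness of $\widehat\cW_0$ enters in both approaches, but where you rely on finite covers and block projections, the Dupuis--Ellis method replaces the separate upper/lower-bound combinatorics with a single variational identity whose two inequalities deliver the two Laplace bounds; this avoids the Stirling/counting step entirely (which, as you note, does not carry over cleanly to the inhomogeneous setting since there is no natural ``number of graphs with prescribed block densities'' to count) and bypasses the $k\to\infty$ after $n\to\infty$ interchange. Your instinct about the labeled/unlabeled subtlety is correct and must be addressed in either framework; in \cite{DupMed22} it is handled by establishing the Laplace limit directly for functionals on the quotient space.

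If you wish to pursue your route, the places that need more than a gesture are: (i) justify the claim $\Upsilon(V_k,W)\to\Upsilon(V,W)$ --- $\Upsilon(\cdot,W)$ is lower semicontinuous but not continuous in cut norm, so the discretization must be arranged so that errors go the right way; (ii) make precise the ``Sanov on finite blocks'' step, since the summands inside one block are independent but not i.i.d.\ (the $w^n_{ij}$ vary within a block), so you need a Gärtner--Ellis or triangular-array version; and (iii) prove that the measure-preserving relabeling orbit contributes only $e^{o(n^2)}$, which in Chatterjee--Varadhan is the $n!\le e^{o(n^2)}$ bound but here also requires that the tilted reference measures behave consistently under relabeling.
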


To translate the LDP for $W$-random graphs to that for spectral measures, we use the
Contraction Principle \cite[Theorem~4.2.1]{Dembo-Zeitouni}. 
To this end, recall that \red{$F: \cW_0\to\cM$} establishes the correspondence between graphons and spectral
measures, i.e.,
\red{$F:~\cW_0\ni W\,\mapsto\, P_W\in \cM.$}
The map $F$ induces
\red{$$\widehat F:~\widehat \cW_0\ni \widehat W\,\mapsto\, \widehat P_W\in \widehat\cM$$}
on the relevant quotient spaces, which is continuous by Theorem~\ref{thm.cont}.
%
%
%
Thus, by the Contraction Principle, Theorem~\ref{thm.Wrandom} yields the following LDP for spectral measures.

\begin{thm}\label{thm.spectral-LDP} \label{sec.ldp-new}
  For \red{$W\in\cW_0$}, let $\{W^n\}$ be a sequence of $W$-random graphons.
  Consider the corresponding sequence 
  of spectral measures $\widehat{\bP}^n\in \widehat\cM$.

  Then $\{\widehat{\bP}^n\}$ satisfies the LDP on  $\widehat\cM$
  with scaling sequence $\frac{n^2}{2}$ and the rate function
  $$
  J(\widehat P) =
  \inf\left\{ I(\widehat W):\; \widehat W\in\widehat\cW,\;  
  \widehat P=\red{\widehat{F}(\widehat W)}\right\}.
   $$
\end{thm}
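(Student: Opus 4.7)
The plan is to obtain Theorem~\ref{thm.spectral-LDP} as a direct consequence of Theorem~\ref{thm.Wrandom} via the Contraction Principle \cite[Theorem 4.2.1]{Dembo-Zeitouni}, with the continuity map supplied by Theorem~\ref{thm.cont}. Concretely, I would identify the random object $\widehat{\bP}^n$ as the image of $\widehat{W}^n$ under the continuous map $\widehat F:\widehat\cW_0 \to \widehat\cM$, so that the LDP transfers mechanically.

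The steps I would carry out are as follows. First, verify the measurability and the identification $\widehat{\bP}^n = \widehat F(\widehat{W}^n)$: since $W^n$ defined by \eqref{Wn} takes values in $\cW_0$, its equivalence class $\widehat{W}^n$ lies in $\widehat\cW_0$, and by construction of $F:W\mapsto P_W$ the spectral measure attached to $K_{W^n}$ has equivalence class $\widehat F(\widehat{W}^n)$. Second, recall that $\{\widehat{W}^n\}$ satisfies the LDP on $\widehat\cW_0$ with scaling sequence $n^2/2$ and good rate function $I$ by Theorem~\ref{thm.Wrandom}. Third, invoke Theorem~\ref{thm.cont} to conclude that $\widehat F$ is continuous between $(\widehat\cW_0,\delta_\Box)$ and $(\widehat\cM,\text{vague})$. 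Fourth, apply the Contraction Principle to conclude that $\{\widehat F(\widehat{W}^n)\} = \{\widehat{\bP}^n\}$ satisfies the LDP on $\widehat\cM$ with scaling sequence $n^2/2$ and rate function
\[
J(\widehat P) \;=\; \inf\bigl\{\, I(\widehat W):\; \widehat W\in\widehat\cW_0,\; \widehat F(\widehat W)=\widehat P\,\bigr\},
\]
with the usual convention $\inf\emptyset=+\infty$. Fifth, observe that $J$ is automatically a good rate function: if $L_\alpha=\{I\le\alpha\}$ is compact in $\widehat\cW_0$, then $\widehat F(L_\alpha)$ is compact in $\widehat\cM$, and level sets of $J$ are contained in images of level sets of $I$, hence compact (after checking that the topology on $\widehat\cM$ is Hausdorff, so that compact sets are closed).

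The main obstacle, and essentially the only non-mechanical point, is ensuring that the hypotheses of the Contraction Principle are literally met in the topological setting introduced in \S\ref{sec.vague}--\S\ref{sec.equivalence}. Two items deserve a brief check: (i) that the vague topology on $\widehat\cM$ induced by Definition~\ref{def.hat-vague} is indeed a topology for which $\widehat F$ is continuous in the sense required by \cite{Dembo-Zeitouni}, which is handled by Remark~\ref{rem.topological} and Theorem~\ref{thm.cont}; and (ii) that $\widehat\cM$ is Hausdorff so that the contracted rate function is well-defined and lower semicontinuous, which follows from the $1$-$1$ correspondence between equivalence classes of graphons in $\widehat\cW_0$ and equivalence classes of spectral measures in $\widehat\cM$ established in Proposition~\ref{prop:M-equiv} together with the metric structure on $\widehat\cW_0$. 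Once these points are recorded, the theorem follows without further computation.
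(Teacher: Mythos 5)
Your proposal matches the paper's argument: both prove the theorem by applying the Contraction Principle of \cite[Theorem~4.2.1]{Dembo-Zeitouni} to the LDP of Theorem~\ref{thm.Wrandom}, with the continuity of $\widehat F:\widehat\cW_0\to\widehat\cM$ supplied by Theorem~\ref{thm.cont}. The additional checks you flag --- that the vague topology on $\widehat\cM$ is genuinely a topology (Remark~\ref{rem.topological}), that $\widehat\cM$ is Hausdorff so the contracted rate function is a good rate function --- are legitimate hypotheses of the Contraction Principle that the paper's prose leaves implicit; spelling them out, as you do via Proposition~\ref{prop:M-equiv}, only strengthens the exposition.
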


\section{Examples}\label{sec.example}
\setcounter{equation}{0}

In this section, we estimate the eigenvalues of random kernel operators for small-world
and random bi-partite graphs subject to large deviations. To this end, we consider the
situation when the realizations of these graphs exhibit atypical edge distribution. For
this case, we use the explicit formula for the minimizer of the constrained rate function
derived in \cite[Lemma~7.2]{DupMed22}. The examples of this section are meant to illustrate potential
effects of the large deviations on the spectrum of $W$-random kernel operators.

Let $W\in \cW_0$ and consider $W^n\in\mathbb{G}(n,W)$. The number of undirected edges
in $W^n$ is given by
$$
\left| E(W^n)\right| = \frac{n^2}{2}\mathcal{L}(W^n),
\qquad  \mathcal{L}(W^n)\doteq\int_{[0,1]^2}W^n(x,y) \, dx dy.
$$
Since $\mathcal{L}(U)=\mathcal{L}(U^\phi)$ for any $\phi\in \bar S$, $\mathcal{L}$ is a functional
on $\widehat\cW_0$. By the Strong Law of Large Numbers, for $W^n\in\mathbb{G}(n,W)$ we have
$$
2n^{-2} \left| E(W^n)\right| \rightarrow w\doteq\mathcal{L}(W), \quad 
n\to\infty\quad \mbox{almost surely}.
$$

Thus, for a typical realization of $W^n$, $\mathcal{L}(W^n)$ lies in a vicinity of $w$. 
Fix
$\delta>0$ and consider a rare event\footnote{The case $\delta<0$ is analyzed similarly.}
$$
A_{n,\delta}=\left\{\mathcal{L}(W^n)\geq w+\delta\right\}.
$$
By \cite[Theorem~7.1]{DupMed22},
$$
\mathbb{P}\left(\delta_\Box \left(\widehat W_n, \widehat W^\ast_\delta\right)
\geq\epsilon \left| A_{n,\delta} \right.\right)
\le \exp\{ -C(\epsilon,\delta) n^2\}.
$$
Here, $W^\ast_\delta$ is a minimizer of the rate function restricted to $A_{n,\delta}$.
For the model at hand, the constrained optimization problem has an explicit solution
(cf.~\cite[Lemma 7.2]{DupMed22}):
$$
 W^\ast_\delta=\frac{W\xi}{1-W+W\xi},
$$
where $\xi\in (0,1)$ is a unique solution to 
\begin{equation}\label{constraint}
\int_{[0,1]^2} \frac{\xi W(x,y)}{1-W(x,y)+\xi W(x,y)}\, dxdy=w+\delta.
\end{equation}

The following lemma yields a leading order asymptotic formula for the solution of \eqref{constraint}.

\begin{lem}\label{lem.IFT} Let $W\in\cW_0$ and 
let $\delta>0$ be sufficiently small. Then
  \begin{equation}\label{ift}
    W^\ast_\delta = W+ \delta \frac{\ell(W)}{\int_{[0,1]^2} \ell(W)(x,y)\, dxdy} +O(\delta^2),\qquad
    \ell(W) \doteq W(1-W),
  \end{equation}
  provided
  \begin{equation}\label{ift-condition}
    \int_{[0,1]^2} \ell(W)(x,y)\, dxdy\neq 0.
  \end{equation}
\end{lem}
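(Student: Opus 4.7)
The plan is to apply the Implicit Function Theorem to obtain a leading-order expansion of $\xi = \xi(\delta)$ near $\delta = 0$, then substitute into the closed-form expression $W^\ast_\delta = W\xi/(1-W+W\xi)$ and Taylor-expand in $\xi - 1$. Since the expected answer is first-order in $\delta$, only the linear term in $\xi - 1$ matters, and the contribution from the denominator is a simple correction.

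\textbf{Step 1: Expand $\xi$ in $\delta$.} Define
\[
G(\xi,\delta) \doteq \int_{[0,1]^2} \frac{\xi W(x,y)}{1-W(x,y)+\xi W(x,y)}\, dxdy - w - \delta,
\]
so that the constraint \eqref{constraint} becomes $G(\xi(\delta),\delta) = 0$. Clearly $G(1,0)=0$, and a direct computation gives
\[
\frac{\partial}{\partial \xi}\frac{\xi W}{1-W+\xi W} \;=\; \frac{W(1-W)}{(1-W+\xi W)^2},
\]
so $\partial_\xi G(1,0) = \int_{[0,1]^2} \ell(W)\, dxdy$, which is nonzero by \eqref{ift-condition}. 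The real-variable Implicit Function Theorem therefore yields a $C^1$ solution $\xi(\delta)$ defined in a neighborhood of $0$ with
\[
\xi'(0) \;=\; -\,\frac{\partial_\delta G(1,0)}{\partial_\xi G(1,0)} \;=\; \frac{1}{\int_{[0,1]^2}\ell(W)\, dxdy},
\]
and hence $\xi(\delta) = 1 + \delta/\!\int\ell(W) + O(\delta^2)$.

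\textbf{Step 2: Expand $W^\ast_\delta$ in $\xi - 1$.} Writing $\eta = \xi(\delta) - 1 = O(\delta)$, we have
\[
W^\ast_\delta \;=\; \frac{W(1+\eta)}{1+W\eta} \;=\; W\bigl(1+\eta\bigr)\bigl(1 - W\eta + O(\eta^2)\bigr) \;=\; W + W(1-W)\eta + O(\eta^2).
\]
Since $W(x,y)\in[0,1]$, the denominator $1+W\eta$ is bounded below by $1-|\eta| \geq 1/2$ uniformly in $(x,y)$ for $|\eta|$ small, so the remainder $O(\eta^2)$ is uniform in $(x,y)$. Substituting the expansion for $\eta$ from Step~1 yields
\[
W^\ast_\delta \;=\; W + \ell(W)\,\frac{\delta}{\int_{[0,1]^2}\ell(W)\,dxdy} + O(\delta^2),
\]
which is \eqref{ift}.

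The computation is essentially routine once the IFT has been set up; the only mildly delicate point is ensuring that all error terms are uniform in $(x,y)\in[0,1]^2$, which is what allows \eqref{ift} to be interpreted pointwise (and hence in any reasonable norm on $\cW_0$). This uniformity follows from the fact that $W$ takes values in $[0,1]$ and $\xi(\delta)$ stays in a compact neighborhood of $1$, so the denominators $1-W+\xi W$ appearing throughout the Taylor expansion are bounded away from zero independently of $(x,y)$.
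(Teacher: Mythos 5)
Your proof is correct and takes essentially the same approach as the paper: apply the Implicit Function Theorem to the constraint to obtain $\xi(\delta)=1+\delta/\!\int\ell(W)+O(\delta^2)$, then Taylor-expand $W^\ast_\delta=\frac{W\xi}{1-W+W\xi}$ around $\xi=1$. Your Step~2 is slightly more explicit than the paper's (which simply says ``expanding the right-hand side in $\alpha$''), and your remark about the uniformity of the $O(\eta^2)$ remainder in $(x,y)$ is a useful clarification that the paper leaves implicit.
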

\begin{proof}
  Equation~\eqref{ift} follows from the Implicit Function Theorem. Specifically,
  let
  $$
  F(\eta)=\int_{[0,1]^2} \frac{\eta W(x,y)}{1-W(x,y)+\eta W(x,y)}\,  dxdy.
  $$
  Note
  $$
  F(1)=w,\qquad F^\prime (1)=\int_{[0,1]^2}\ell(W)(x,y) \, dxdy.
  $$
  By the Implicit Function Theorem, for $\delta\in (-\varepsilon,\varepsilon)$ there
  is a smooth function $\alpha(\delta)$ such that
  $$
  F\left(1+\alpha(\delta)\right)=w+\delta, \qquad \alpha(0)=0.
  $$
  Furthermore,
 \begin{equation}\label{found-alpha}
  \alpha(\delta)=\frac{\delta}{\int_{[0,1]^2}\ell(W)(x,y) \, dxdy} +O(\delta^2).
\end{equation}

\red{By plugging in $\xi = 1+\alpha$ into the formula of $W_\delta^*$, expanding the right--hand side}
in $\alpha$ and using \eqref{found-alpha}, we derive \eqref{ift}.
\end{proof}

With Lemma~\ref{lem.IFT} in hand, we are in a position to discuss our two examples:
the small--world graphs and random bipartite graphs.

\subsection{Bipartite graphs}
Let $\alpha\in (0,1/2)$ 
and $p\in (0,1)$ and consider
\begin{equation}\label{bipart}
  W(x,y)=p\left[ \1_Q(x,y) + \1_Q(y,x)\right],\qquad Q\doteq [0,\alpha]\times [\alpha, 1].
\end{equation}

$K_W$ has two nonzero simple eigenvalues $\lambda_{1,2}=\pm\sqrt{\alpha (1-\alpha)}p$ with the corresponding
eigenfunctions
$$
v_{1,2}=\sqrt{1-\alpha} \1_{[0,\alpha)}(x) \pm\sqrt{\alpha} \1_{[\alpha, 1]}(x).
$$

In addition, $K_w$ has a zero eigenvalue $\lambda_0=0$. The corresponding eigenspace is 
the subspace
$$
V_0=\left\{f:\quad \int_{[0,\alpha)} f \, dx=0 \;\mbox{and}\; \int_{[\alpha, 1]} f \, dx=0.\right\}. 
$$

\red{From Lemma~\ref{lem.IFT},} under large deviations, $W$ is transformed to
$$
W^\delta (x,y)= \left(p +\frac{\delta}{2\alpha \left(1-\alpha\right)}\right) \left[ \1_Q(x,y) + \1_Q(y,x) \right] +O(\delta^2),
$$
 and the first two nonzero eigenvalues get perturbed to
    $$
    \lambda_{1,2}^\delta=\pm\left( \sqrt{\alpha (1-\alpha)}p +\frac{\delta}{2\sqrt{\alpha (1-\alpha)}} \right)+O(\delta^2).
    $$

    In addition to the eigenvalues of $K_W$, in applications one might be
    interested in  the eigenvalues of the graph Laplacian 
    $$
    (L_W f)(x)=\int W(x,y)\left( f(y)-f(x)\right) dy
    $$
    (see, e.g., \cite{MedPel23}).
    Below we compute the leading order approximations for the eigenvalues of $L_W$
    for the bipartite graph.

    The eigenvalues and the corresponding eigenvectors of $L_W$ can be easily computed
    \begin{description}
    \item[(i)] $\lambda_0=0$ and $v_0\equiv 1$,
    \item[(ii)] $\lambda_1=-p$ and $v_1=(\alpha-1)\1_{[0,\alpha)}(x) + \alpha \1_{[\alpha, 1]}(x)$, 
    \item[(iii)] $\lambda_2=-p\alpha$ with the  eigenspace
      $V_2=\left\{f(x)=f_0(x)\1_{[\alpha, 1]}(x)\;\&\; \int_\alpha^1 f_0(x)dx =0\right\}$.
      \item[(iiv)] $\lambda_3=-p(1-\alpha)$ with the  eigenspace
      $V_3=\left\{f(x)=f_0(x)\1_{[0, \alpha]}(x)\;\&\; \int_0^\alpha f_0(x)dx =0\right\}$.
      \end{description}

      The perturbed eigenvalues are
        \begin{align*}
        \lambda_1^\delta &=\lambda_1 -\frac{\delta}{2\alpha(1-\alpha)} + O(\delta^2),\\
        \lambda_2^\delta &=\lambda_2 -\frac{\delta}{2(1-\alpha)}+ O(\delta^2),\\
        \lambda_3^\delta &=\lambda_3 -\frac{\delta}{2\alpha}+ O(\delta^2).
      \end{align*}    
      Note that different eigenvalues are perturbed differently, with $\lambda_1$ getting the largest
      perturbation. Thus,
      the order of eigenvalues may change under large deviations. The corresponding eigenspaces,
      on the other hand, 
      remain unaffected by large deviations. We refer to this scenario as eigenvalue 
      switching (see \cite{Masioti2022} for  a related phenomenon
      in Principal Component Analysis).

      The eigenvalue switching may have important implications in bifurcation 
      problems, Principal Component Analysis, or Graph Signal Processing. Specifically,
      in nonlinear spatially extended dynamical systems,  at a Turing bifurcation,
      an emerging spatiotemporal pattern appears following 
      the loss of stability of spatially homogeneous solution.
      Such pattern is determined
      by the eigenfunctions corresponding to the small positive eigenvalue, which
      in network models is determined by the first nonzero eigenvalue of $K_W$
      or $L_W$ depending on the model at hand (see, e.g., \cite{MedPel23}).
      In this situation, the eigenvalue switching may result in a qualitatively
      different pattern, because the eigenspace corresponding to the principal
      eigenvalue has changed due to the new ordering of the eigenvalues.

      Likewise, in the context of Principal Component Analysis or Graph Signal 
      Processing, the importance of the projection of a function (data) on a 
      given eigenspace is determined by the order of the corresponding 
      eigenvalue. Thus, the eigenvalue switching will change the hierarchy of 
      the spectral projections in these problems.
      
      \subsection{Small--world graphs}\label{subsec:small-world}

Let $K_{q,p;r}(x)$ be a $1$--periodic function on $\R$, which is defined on $[-1/2, 1/2)$ by 
$$
K_{q,p;r}(x)=\left\{\begin{array}{ll}
               q, & |x|\le r,\\
               p,&  \mbox{otherwise}.
             \end{array}
           \right.
$$
Here, $r\in (0, 0.5]$ and $p,q\in (0,1)$. Consider $W(x,y)= K_{q,p;r}(x-y)$ and
$$
             (K_Wf)(x)= (K_{q,p;r}\ast f) (x)=\int_\bbT K_{q,p;r}(x-y) f(y) dy.
$$
Using the properties of the Fourier transform of a convolution, it is straightforward
to calculate the eigenvalues of $K_W$:
\begin{align}\label{lambda-0}
  \mu_0& = 2rq+(1-2r)p,\\
  \label{lambda-k}
  \mu_k&= \frac{q-p}{\pi k} \sin\left(2\pi kr\right), \quad k\in\dot\bbZ.
\end{align}
The corresponding eigenfunctions are $v_k=e^{\iu 2\pi kx},\; k\in\Z$.  
Note that
$\mu_0$ is a simple eigenvalue, while all other eigenvalues have
multiplicity at least $2$, because $\mu_k=\mu_{-k}$ for $k\neq 0$.

We now turn to $W(x,y)=K_{1-p,p;r}(x-y)$. Using \eqref{ift}, we find that
$$
W^\ast_\delta=W+\delta +O(\delta^2)=
K_{1-p+\delta +O(\delta^2), p+\delta+ O(\delta^2);r}(x-y).
$$
From this, using \eqref{lambda-0} and \eqref{lambda-k}, we have
\begin{align}\label{lambda-0-delta}
  \mu_0^\delta& = \mu_0+\delta +O(\delta^2),\\
  \label{lambda-k-delta}
  \mu_k^\delta&= \mu_k+\frac{O(\delta^2)}{\pi k} \sin\left(2\pi kr\right), \quad k\in\dot\bbZ.
\end{align}
This shows explicitly how the eigenvalues of the small-world graphons are perturbed.
Note in particular that the perturbation affects the principal eigenvalue $\mu_0$ the
most. The effect on all other eigenvalues is smaller and it becomes even smaller for larger
$|k|$.


\section{Discussion}\label{sec.discuss}
\setcounter{equation}{0}

Motivated originally by problems in combinatorics and theoretical computer science \cite{LovaszAMS}, the theory of
graphons
rapidly penetrated different areas of mathematics, as well as its applications to physics, biology, and other disciplines.
Examples include random graphs in probability \cite{Cha16}, interacting
particle systems  in statistical physics \cite{OliRei20, DupMed22}, coupled dynamical systems
\cite{Med14b, KVMed18},
mean field games in economics and control systems \cite{CaiHua21, CaiHo22}, and signal processing in
engineering \cite{Ruiz-2021, GhaJan22},
to name a few. Graphons provide an effective analytically  tractable way for modeling large networks of different nature.
They opened a way for rigorous treatment of network models, which were outside the realm of mathematical analysis before.

The  analysis  of different network models often relies on the  information about eigenvalues and the 
eigenvectors of the kernel operators defined by graphons.
Spectral methods feature prominently in the analysis of synchronization and pattern formation in dynamical networks \cite{ChiMed19a, MedPel23}
and in signal processing on graphs \cite{GhaJan22}. In the analysis of dynamical models the eigenvalues
and the eigenvectors are
used to identify bifurcations and to describe the bifurcating solutions. In signal processing, spectral projections are
used in the analysis of network data.

When dealing with random networks, one has to take into account rare events when the structure of the network
deviates from typical realizations. Such events, although extremely rare, are inevitable for sufficiently many realizations
of the random graph model. The LDP quantifies the probability of such events on the one hand.
On the other hand, it describes a typical realization of such rare events. The latter may be used to describe the system
outcome, whether it is in the form of network dynamics or data projections depending on the model at hand. 

The LDP for spectral measures identified and proved in this paper, describes how the eigenvalues and
the eigenvectors change under large deviations. The examples analyzed in Section~\ref{sec.example} suggest
several qualitative phenomena, which have a potential to be useful in applications. First, the eigenvalue estimates derived
in this section show that the large deviations in the structure of the $W$-random graph affect the principal eigenvalue
the most. This may lead to notable changes in the spectral gap, which is important for stability
of certain dynamical regimes and for the rate of convergence of the random walk models. In addition,
since different eigenvalues are affected differently, while the eigenspaces of the kernel operators
based on Cayley graphons or stochastic block graphons remain unaffected, the switching of the order of spectral
projections may take place. This may have implications in bifurcation problems, the principal component analysis, and signal processing on graphs, to name a few potential areas of applications.

\section*{Acknowledgements}
This work was partially supported by the National Science Foundation Grants DMS-1902301 (to MG) and DMS-2009233 (to GM).

\bibliographystyle{amsplain}
\def\cprime{$'$} \def\cprime{$'$} \def\cprime{$'$}
\providecommand{\bysame}{\leavevmode\hbox to3em{\hrulefill}\thinspace}
\providecommand{\MR}{\relax\ifhmode\unskip\space\fi MR }
\providecommand{\MRhref}[2]{%
  \href{http://www.ams.org/mathscinet-getitem?mr=#1}{#2}
}
\providecommand{\href}[2]{#2}

\end{document}